\newcommand{\R}{\ensuremath{\mathbb{R}}}
\newcommand{\N}{\ensuremath{\mathbb{N}}}
\newcommand{\Z}{\ensuremath{\mathbb{Z}}}
\newtheorem {theorem} {Theorem}
\newtheorem {proposition}  {Proposition}
\newtheorem {lemma}  {Lemma}
\newtheorem {definition} [theorem] {Definition}
\newtheorem {remark} {Remark}
\newtheorem {example} {Example}
\begin{document}

\title[Chaos and Non Trivial Minimal Sets in PSVFs]{Chaotic Planar Piecewise Smooth Vector Fields With Non Trivial Minimal Sets}

\author[C.A. Buzzi, T. de Carvalho and R.D. Euzébio]
{Claudio A. Buzzi$^1$, Tiago de Carvalho$^2$ and\\ Rodrigo D. Euzébio$^1$}

\address{$^1$ IBILCE--UNESP, CEP 15054--000,
S. J. Rio Preto, S\~ao Paulo, Brazil}

\address{$^2$ FC--UNESP, CEP 17033--360,
Bauru, São Paulo, Brazil}

\email{buzzi@ibilce.unesp.br}

\email{tcarvalho@fc.unesp.br}

\email{rodrigo.euzebio@sjrp.unesp.br}

\subjclass[2010]{Primary 34C28, 37D45}

\keywords{non-smooth vector fields, non deterministic chaos, minimal sets}
\date{}
\dedicatory{} \maketitle


\begin{abstract}
In this paper some aspects on chaotic behavior and minimality in planar piecewise smooth vector fields theory are treated. The occurrence of non-deterministic chaos is observed and the concept of orientable minimality is introduced. It is also investigated some relations between minimality and orientable minimality and observed the existence of new kinds of non-trivial minimal sets in chaotic systems. The approach is geometrical and involve the ordinary techniques of non-smooth systems.
\end{abstract}

\section{Introduction}\label{secao introducao}

Piecewise smooth vector fields (PSVFs, for short) have become
certainly one of the common frontiers between Ma\-the\-matics and
Physics or Engineering.
Many authors have contributed to the study of PSVFs  (see
for instance the pioneering work \cite{Fi} or the didactic works \cite{diBernardo-livro,Marco-enciclopedia}, and references therein about details of
these multi-valued vector fields).  In our approach Filippov's convention is considered. So, the vector field of the model is discontinuous across a \textit{switching manifold} and it is possible for its trajectories to be
confined onto the switching manifold itself. The occurrence of such
behavior, known as \textit{sliding motion}, has been reported in a
wide range of applications. We can find important examples in electrical circuits having switches, in mechanical devices in which components collide into each other, in problems with friction, sliding or squealing, among others (see \cite{diBernardo-livro}).

\smallskip

For planar smooth vector fields there is a very developed theory nowadays, mainly in the planar case. In such environment, questions about chaotic behaviour and minimality, for instance, are complety answered. Indeed, the Jordan curve theorem assures that there is no chaotic behaviour in planar systems and the Poincaré-Bendixson theorem says that for a given flow the minimal sets are just equilibria or limit cycles. Nevertheless, in higher dimension, while minimal sets are described by the Denjoy-Schwartz theorem (under some suitable hypothesis $-$ see \cite{Gutierrez}), chaotic systems are massively studied and a final theory is far away from be reached.

\smallskip

A very interesting and useful subject is to study these kind of objects in the PSVF's scenario. Furthermore, we must observe that chaotic behaviour and non-trivial minimality have been understudied in the PSVF's literature. In three-dimensional systems, some results on chaotic behaviour were obtained by Jeffrey and coauthor in \cite{J} and \cite{CJ}. As far as we know, chaos has not been treated in planar PSVFs apart from the particular example exhibited in \cite{J}. In addition, some questions on non-trivial minimality were provided in \cite{BCE} for planar PSVFs.

\smallskip

The specific topic addressed in this paper concerns with the occurrence of chaos in planar PSVFs and some distinct definitions of minimal sets. Moreover, we study the occurrence of non-trivial minimal sets different from those ones presented in \cite{BCE}. We stress that a trivial minimal set is either an equilibrium point  or a closed periodic orbit. For smooth vector fields this is a very important subject because minimal sets are part essential of limit sets. As long as the authors know, a first study about the minimal set theory for PSVFs and a discussion about the validity of the Poincaré-Bendixson theorem for PSVFs is given only in our previous paper \cite{BCE}. In \cite{BCE} we gave a definition of minimal sets for this scenario and exhibited non-trivial minimal sets for planar PSVFs with sliding motion. Morever, we showed that an analogous theorem to the Poincaré-Bendixson one for PSVFs without sliding motion can be achieved.

\smallskip

Following the approach in \cite{BCE}, here we present some special PSVFs and prove the existence of compact invariant sets with chaotic flow. Actually, these sets will be non-trivial minimal sets having no symmetry. We also propose definitions of minimal sets for positive (and negative) flow of PSVFs (or orientable minimality) and study some relations between them and the definition of minimal set stablished in \cite{BCE}. With these new definitions we analyze the occurrence of new kind of non-trivial minimal sets for PSVFs defined in $\R^2$ and the validity of an analogous to the Poincaré-Ben\-dix\-son Theorem.

\smallskip

The paper is organized as follows: In Section \ref{secao preliminares} some concepts of the standard theory on PSVFs and a brief introduction about Filippov systems are introduced. In Section \ref{results} the results of the paper are presented into two parts: first, in Subsection \ref{chaos}, the occurence of chaos in a particular set is verified. Later, in Subsection \ref{classes_minimais} definitions of minimal sets for positive (and negative) flow of PSVFs are stablished, some correspondences between them are studied and the presence of chaotic behaviour in some minimal sets under these definitions are examined. Moreover, a theorem relating orientable minimality and chaos is presented. Finally, in Section \ref{conclusao} a short conclusion of the work developed in the present paper is presented.

\section{Preliminaries}\label{secao preliminares}

Let $V$ be an arbitrarily small neighborhood of $0\in\R^2$ and consider a codimension one manifold $\Sigma$ of $\R^2$ given by
$\Sigma =f^{-1}(0),$ where $f:V\rightarrow \R$ is a smooth function
having $0\in \R$ as a regular value (i.e. $\nabla f(p)\neq 0$, for
any $p\in f^{-1}({0}))$. We call $\Sigma$ the \textit{switching
manifold} that is the separating boundary of the regions
$\Sigma^+=\{q\in V \, | \, f(q) \geq 0\}$ and $\Sigma^-=\{q \in V \,
| \, f(q)\leq 0\}$. Observe that we can assume, locally around the origin of $\R^2$, that
$f(x,y)=y.$

Designate by $\chi$ the space of C$^r$-vector fields on
$V\subset\R^2$, with $r \geq 1$
large enough for our purposes. Call $\Omega$ the space of vector
fields $Z: V \rightarrow \R ^{2}$ such that
\begin{equation}\label{eq Z}
 Z(x,y)=\left\{\begin{array}{l} X(x,y),\quad $for$ \quad (x,y) \in
\Sigma^+,\\ Y(x,y),\quad $for$ \quad (x,y) \in \Sigma^-,
\end{array}\right.
\end{equation}
where $X=(X_1,X_2) , Y = (Y_1,Y_2) \in \chi$.  The trajectories of
$Z$ are solutions of  ${\dot q}=Z(q)$ and we accept it to be
multi-valued at points of $\Sigma$. The basic results of
differential equations in this context were stated by Filippov in
\cite{Fi}, that we summarize next. Indeed, consider Lie derivatives \[X.f(p)=\left\langle \nabla f(p),
X(p)\right\rangle \quad \mbox{ and } \quad X^i.f(p)=\left\langle
\nabla X^{i-1}. f(p), X(p)\right\rangle, i\geq 2
\]
where $\langle . , . \rangle$ is the usual inner product in $\R^2$.

\smallskip

We  distinguish the following regions on the discontinuity set
$\Sigma$:
\begin{itemize}
\item [(i)]$\Sigma^c\subseteq\Sigma$ is the \textit{sewing region} if
$(X.f)(Y.f)>0$ on $\Sigma^c$ .
\item [(ii)]$\Sigma^e\subseteq\Sigma$ is the \textit{escaping region} if
$(X.f)>0$ and $(Y.f)<0$ on $\Sigma^e$.
\item [(iii)]$\Sigma^s\subseteq\Sigma$ is the \textit{sliding region} if
$(X.f)<0$ and $(Y.f)>0$ on $\Sigma^s$.
\end{itemize}

The \textit{sliding vector field}
associated to $Z\in \Omega$ is the vector field  $Z^s$ tangent to $\Sigma^s$
and defined at $q\in \Sigma^s$ by $Z^s(q)=m-q$ with $m$ being the
point of the segment joining $q+X(q)$ and $q+Y(q)$ such that $m-q$
is tangent to $\Sigma^s$ (see Figure \ref{fig def filipov}). It is
clear that if $q\in \Sigma^s$ then $q\in \Sigma^e$ for $-Z$ and then
we  can define the {\it escaping vector field} on $\Sigma^e$
associated to $Z$ by $Z^e=-(-Z)^s$. In what follows we use the
notation $Z^\Sigma$ for both cases.  In our pictures
we represent the dynamics of $Z^\Sigma$ by double arrows.\\

\begin{figure}[!h]
\begin{center}
\psfrag{A}{$q$} \psfrag{B}{$q + Y(q)$} \psfrag{C}{$q + X(q)$}
\psfrag{D}{} \psfrag{E}{\hspace{1cm}$Z^\Sigma(q)$}
\psfrag{F}{\hspace{.7cm}$\Sigma^s$} \psfrag{G}{} \epsfxsize=5.5cm
\epsfbox{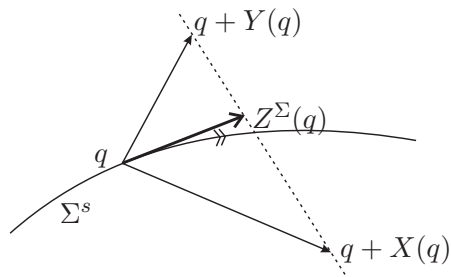} \caption{\small{Filippov's
convention.}} \label{fig def filipov}
\end{center}
\end{figure}

\smallskip

In what follows we present the definition of local and global trajectories for PSVFs. Before that, we remark that a tangency point of system \eqref{eq Z} is characterized by $(X.f(q))(Y.f(q)) =0$. If there exist a characteristic orbit of the vectors fields $X$ or $Y$ reaching $q$ in a finite time, then such tangency is called a \textit{visible tangency}. Otherwise we call $q$ an \textit{invisible tangency}. In addition, a tangency point $p$ is \textit{singular} if $p$ is a invisible tangency for both $X$ and $Y$.  On the other hand, a tangential singularity $p$ is \textit{regular} if it is not singular.

\smallskip

The definition of local trajectory can also be found in \cite{Marcel}.

\begin{definition}\label{definicao trajetorias}
The \textbf{local trajectory (orbit)} $\phi_{Z}(t,p)$ of a PSVF given by \eqref{eq Z} is
defined as follows:
\begin{itemize}
\item For $p \in \Sigma^+ \backslash \Sigma$ and $p \in \Sigma^{-} \backslash \Sigma$ the trajectory is
given by $\phi_{Z}(t,p)=\phi_{X}(t,p)$ and
$\phi_{Z}(t,p)=\phi_{Y}(t,p)$ respectively, where $t \in I$.

\item For $p \in \Sigma^{c}$ such that $X.f(p)>0$, $Y.f(p)>0$ and  taking the
origin of time at $p$, the trajectory is defined as
$\phi_{Z}(t,p)=\phi_{Y}(t,p)$ for $t \in I \cap \{ t \leq 0 \}$ and
$\phi_{Z}(t,p)=\phi_{X}(t,p)$ for $t \in I \cap \{ t \geq 0 \}$. For
the case $X.f(p)<0$ and $Y.f(p)<0$  the definition is the same
reversing time.

\item For $p \in \Sigma^e$ and  taking the
origin of time at $p$, the trajectory is defined as
$\phi_{Z}(t,p)=\phi_{Z^{\Sigma}}(t,p)$ for $t \in I \cap \{ t \leq 0 \}$ and
$\phi_{Z}(t,p)$ is either $\phi_{X}(t,p)$ or $\phi_{Y}(t,p)$ or $\phi_{Z^{\Sigma}}(t,p)$ for $t \in I \cap \{ t \geq 0 \}$. For
the case $p \in \Sigma^s$  the definition is the same
reversing time.

\item For $p$ a regular tangency point and  taking the
origin of time at $p$, the trajectory is defined as
$\phi_{Z}(t,p)=\phi_{1}(t,p)$ for $t \in I \cap \{ t \leq 0 \}$ and
$\phi_{Z}(t,p)=\phi_{2}(t,p)$ for $t \in I \cap \{ t \geq 0 \}$, where each $\phi_{1},\phi_{2}$ is either $\phi_{X}$ or $\phi_{Y}$ or $\phi_{Z^{\Sigma}}$.

\item For $p$ a singular tangency point
    $\phi_{Z}(t,p)=p$ for all $t \in \R$.
\end{itemize}
\end{definition}

\smallskip

The next definitions was stated in \cite{BCE}.

\smallskip

\begin{definition}\label{definicao orbita}
A \textbf{global trajectory (orbit)} $\Gamma_{Z}(t,p_0)$ of $Z\in \chi$ passing through $p_0$ is a union $$\Gamma_{Z}(t,p_0)=\bigcup_{i\in \Z} \{ \sigma_i(t,p_i); t_i \leq t \leq t_{i+1} \}$$of preserving-orientation local trajectories $\sigma_i(t,p_i)$ satisfying $\sigma_i(t_{i+1},p_i)=\sigma_{i+1}(t_{i+1},p_{i+1})=p_{i+1}$ and $t_i \rightarrow \pm \infty$ as $i \rightarrow \pm \infty$. A global trajectory is a \textbf{positive} (respectively, \textbf{negative}) \textbf{global trajectory} if $i \in \N$ (respectively,  $-i \in \N$) and $t_0 = 0$.
\end{definition}

\smallskip

\begin{definition}\label{invariancia}
A set $A\subset\mathbb{R}^{2}$ is $\mathbf{Z}$\textbf{-invariant} if for each $p\in A$ and all global trajectory $\Gamma_{Z}(t,p)$ passing through $p$ it holds $\Gamma_{Z}(t,p) \subset A$.
\end{definition}

\smallskip

\begin{definition}\label{definicao minimal Z}
Consider $\mathbf{Z \in \Omega}$. A set $M\subset\mathbb{R}^{2}$ is $Z$-\textbf{minimal} if
\begin{enumerate}
\item[(i)] $M\neq\emptyset$;
\item[(ii)] $M$ is compact;
\item[(iii)] $M$ is $Z$-invariant;
\item[(iv)] $M$ does not contain proper subset satisfying (i), (ii) and (iii).
\end{enumerate}
\end{definition}

In the next section we present the main results of the paper.

\section{Main Results}\label{results}

\subsection{Non Deterministic Chaos in Planar PSVFs}\label{chaos}

Since the dynamic on sliding and escaping regions are set-valued, following the previous nomenclature of \cite{CJ} and \cite{M}, it is non-deterministic. In fact, as long as we know, the definition of non-deterministic chaos for PSVFs was first introduced in \cite{CJ}, where the authors adapt the classical definition of, for example \cite{M}, to this context. Of course, the definition must contemplate topological transitivity and sensitivity to initial conditions. For this purpose, consider the following definitions:

\begin{definition}\label{definicao transitividade}
 System \eqref{eq Z} is topologically transitive on an invariant set $W$ if for every pair of nonempty, open sets $U$ and $V$ in $W$, there exist $q\in U$, $\Gamma_{Z}^{+}(t,q)$ a positive global trajectory and $t_{0}>0$ such that $\Gamma_{Z}^{+}(t_{0},q) \in V$.
\end{definition}


\begin{definition}\label{definicao sensibilidade}
System \eqref{eq Z} exhibits sensitive dependence on a compact invariant set $W$ if there is a fixed $r>0$ satisfying $r < diam(W)$ such that for each $x\in W$ and $\varepsilon>0$ there exist a $y\in B_{\varepsilon}(x)\cap W$ and positive global trajectories $\Gamma_{x}^{+}$ and $\Gamma_{y}^{+}$ passing through $x$ and $y$, respectively, satisfying
$$
d_{H}(\Gamma_{x}^{+},\Gamma_{y}^{+})=\displaystyle\sup_{a\in\Gamma_{x}^{+},b\in\Gamma_{y}^{+}}d(a,b)>r,
$$
where $diam(W)$ is the diameter of $W$ and $d$ is the Euclidean distance.
\end{definition}


As observed in \cite{CJ}, the two previous definitions coincide for
single-valued flows, making this a natural extension for a set-valued flow. Now we define a non-deterministic chaotic set:

\begin{definition}\label{definicao caotico}
System \eqref{eq Z} is chaotic on a compact invariant set $W$ if it is topologically transitive and exhibits sensitive dependence on $W$.
\end{definition}

%

In what follows we present a chaotic set coming from a non-trivial minimal set.

\begin{theorem}\label{teorema caos}
Consider $Z=(X,Y) \in \Omega$, where $X(x,y)=(1,-2x)$, $Y(x,y)=$ $(-2,4x^{3}-2x)$ and $\Sigma=f^{-1}(0)=\{(x,y)\in\mathbb{R}^{2};y=0\}$. Then the planar PSVF $Z$ is chaotic (see Figure \ref{fig minimal 2}) on the compact invariant set
\begin{equation}\label{equacao conjunto minimal}
\Lambda=\{(x,y)\in\mathbb{R}^{2};-1\leq x\leq 1\;\; \mbox{and}\;\; x^{4}/2-x^{2}/2\leq y\leq 1-x^{2} \}.
\end{equation}
\end{theorem}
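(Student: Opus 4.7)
The plan is to verify the four ingredients implicit in Definition \ref{definicao caotico}: compactness of $\Lambda$, $Z$-invariance, topological transitivity, and sensitive dependence. Compactness is immediate from the defining inequalities. For $Z$-invariance I would first check that the upper boundary $y=1-x^2$ is an orbit of $X$ in $\Sigma^+$ (directly from $\dot y/\dot x=-2x$) and that the lower boundary $y=x^4/2-x^2/2$ is an orbit of $Y$ in $\Sigma^-$. Since $\partial\Lambda$ consists of these two smooth arcs meeting the segment $[-1,1]\times\{0\}\subset\Sigma$ at the three vertices $(\pm 1,0)$ and $(0,0)$, no local trajectory of $X$, $Y$ or $Z^\Sigma$ can exit $\Lambda$, and gluing the local pieces at $\Sigma$ propagates this obstruction into global $Z$-invariance.

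Next I would analyse the partition of $\Sigma\cap\Lambda$ induced by the signs of $X.f=-2x$ and of $Y.f$: one crossing segment at each end of $[-1,1]$, one escape segment adjacent to the origin on one side and one sliding segment on the other, invisible tangencies of $Y$ at $x=\pm 1/\sqrt{2}$, and a visible two-fold at the origin where both $X$ and $Y$ are tangent to $\Sigma$. Computing $Z^\Sigma$ explicitly, I would show that it drives every point of the sliding segment to the origin in finite time, while on the escape segment it is what produces the non-determinism: from any $(x_e,0)\in\Sigma^e$ a forward global trajectory may leave along the $X$-parabola through $(x_e,0)$, along the $Y$-orbit through $(x_e,0)$, or continue along $\Sigma$. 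By tracking these three branches through the crossing connections at $(\pm 1,0)$, one obtains a two-parameter family of global orbits that foliates $\Lambda$.

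To obtain topological transitivity, I would use the origin as a universal hub: from any $u\in U$ the essentially deterministic forward flow reaches $\Sigma$ and then the origin in finite time, after which by entering and sliding along the escape segment it can reach any desired $x_e\in\Sigma^e$ before emerging. Given any target open set $V\subset\Lambda$, the foliation above lets one solve for the appropriate $x_e$ and choice of emerging branch so that the resulting orbit meets $V$. Sensitive dependence then comes for free from the same non-determinism: for every $x\in\Lambda$ we can steer the forward flow to the origin (or take $y=x$ itself inside $B_\varepsilon(x)$), and the two-fold there admits two branches whose forward Hausdorff separation is bounded below by a fixed $r<\mathrm{diam}(\Lambda)$, for instance one branch reaching the top $(0,1)$ of $\Lambda$ along the upper $X$-arc while the other remains inside the lower pocket of $\Sigma^-$. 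The main difficulty will be the careful bookkeeping of the first-return map induced on $\Sigma$ by each type of emerging branch, and a clean verification that for every open $V\subset\Lambda$ the required value of $x_e$ actually falls inside the escape segment.
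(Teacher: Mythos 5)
Your proposal is correct and takes essentially the same approach as the paper: your ``hub'' argument (every forward orbit funnels into the two-fold at the origin in finite time, and the set-valued branching along the escaping segment then steers it onto any prescribed target) is exactly the content of the paper's Lemma \ref{lemma caos}, which the paper states with only a one-line justification (deferring to \cite{BCE}) and from which it deduces topological transitivity and sensitive dependence in the same way you do. One caveat: the structure of $\Sigma$ you describe (escaping segment to the left of the origin, sliding segment to its right, crossing near $x=\pm 1$, invisible folds of $Y$ at $x=\pm 1/\sqrt{2}$, and the lower boundary $y=x^{4}/2-x^{2}/2$ an orbit of $Y$) is the one depicted in Figure \ref{fig minimal 2} and tacitly used by the paper's own proof, but it corresponds to $Y=(-2,\,2x-4x^{3})$ rather than the printed $Y=(-2,\,4x^{3}-2x)$, for which the signs of $Y.f$ reverse (the crossing segments become adjacent to the origin and the lower boundary is no longer a $Y$-orbit); this sign slip lies in the paper's statement itself, so it does not count as a gap in your argument relative to the paper's proof.
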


\begin{figure}[h]
\begin{center}
\begin{overpic}[height=0.17\paperheight]{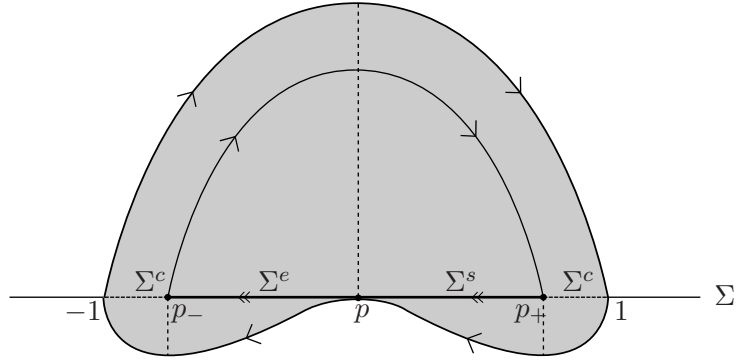}
\put(73.2,6){$p_{+}$} \put(23.5,6){$p_{-}$} \put(50,6){$p$} \put(36,9.6){$\Sigma^{e}$} \put(63,9.6){$\Sigma^{s}$} \put(18,9.6){$\Sigma^{c}$} \put(80,9.6){$\Sigma^{c}$} \put(102,7.6){$\Sigma$} \put(8,5.5){$-1$}  \put(87.5,5.5){$1$}
\end{overpic}
\caption{\small{The $Z$-minimal set $\Lambda$. The PSVF $Z=((1,-2x),(-2,4x^{3}-2x))$ is chaotic on it.}} \label{fig minimal 2}
\end{center}
\end{figure}

Before proving Theorem \ref{teorema caos} we present the following lemma. It will be fundamental in the proof of Theorem \ref{teorema caos}.

\begin{lemma}\label{lemma caos}
Consider the set $\Lambda$ defined in Theorem \ref{teorema caos}. Then, for any $x,y\in\Lambda$, there exist a positive global trajectory $\Gamma^{+}(t,x)$ passing through $x$ and $t_{0}>0$ such that $\Gamma^{+}(t_{0},x)=y$.
\end{lemma}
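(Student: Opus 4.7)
My plan is to use the escaping region $\Sigma^e\subset\Sigma$ as a non-deterministic ``switching hub.'' I would reduce the lemma to two intermediate facts: first, that from every $x\in\Lambda$ some positive trajectory of $Z$ meets $\Sigma^e$ in finite positive time, and second, that from every point $p\in\Sigma^e$ and every $y\in\Lambda$ there exists a positive trajectory from $p$ passing through $y$ at some finite positive time. Concatenating an $x\!\to\!p$ trajectory with a $p\!\to\!y$ trajectory at a common $p\in\Sigma^e$ then produces the desired positive global trajectory $x\!\to\!y$.

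The first fact is a reading off of the phase portrait. Orbits of $X$ in $\Sigma^+$ are the right-moving parabolas $y=c-x^2$, and orbits of $Y$ in $\Sigma^-$ are level curves of $y-(x^2-x^4)/2$ moving leftward; on $\Sigma$ the regions $\Sigma^c$, $\Sigma^s$ and $\Sigma^e$ are disjoint open arcs separated by tangencies at $x=0$ and $x=\pm 1/\sqrt{2}$, and both the sliding and escaping vector fields drift leftward. Any interior point of $\Sigma^\pm\cap\Lambda$ therefore reaches $\Sigma$ in finite time, and once on $\Sigma$ the alternation of sewing crossings, $X$-arcs over the top of $\Lambda$, $Y$-arcs through the lower lobes and sliding on $\Sigma^s$ drives the trajectory through the fold at the origin into $\Sigma^e$. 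Points already on $\Sigma\cap\Lambda$ are treated directly.

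For the second and crucial fact I exploit the three-way choice available at every point of $\Sigma^e$: the trajectory may continue sliding, launch into $\Sigma^+$ along $X$, or launch into $\Sigma^-$ along $Y$. Starting from $p$, I first slide along $\Sigma^e$ to a chosen point $(x_e,0)$ and then either (i) launch along the $X$-orbit $y=x_e^2-x^2$ into $\Sigma^+$, or (ii) launch along the $Y$-orbit through $(x_e,0)$ into $\Sigma^-$. Option (i) sweeps out the parabolic arcs with apex in $(0,1/2)$; option (ii) dives into one lobe of $\Sigma^-\cap\Lambda$, returns to $\Sigma$ at the left sewing arc, rises as an $X$-arc of apex $1-x_e^2\in(1/2,1)$, comes down on the right sewing arc, traverses the other lobe and finally arrives at $\Sigma^s$. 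Letting $x_e$ range over $\Sigma^e$, the union of these two families covers every $X$-parabola of apex in $(0,1)$ and every $Y$-orbit inside either lobe, hence every point of $\Lambda$; in particular $y$ lies on one such trajectory.

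The main obstacle will be bookkeeping: verifying that the various concatenated paths really are admissible global trajectories in the sense of Definitions 1 and 2 at the regular tangencies $x=0$ and $x=\pm 1/\sqrt{2}$ and at the transitions between $\Sigma^s$, $\Sigma^e$ and the sewing arcs, and checking that the coverage of $\Lambda$ by the two families above is genuinely exhaustive (including the points on $\partial\Lambda$ and on the lower boundary curve). These verifications are routine, but they do require some case analysis performed from the explicit integrals of $X$ and $Y$.
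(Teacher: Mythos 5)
Your plan is, at bottom, the paper's own argument: the paper's one-line proof rests on the observation that every positive trajectory in $\Lambda$ funnels through the fold point $p=(0,0)$, and that the non-determinism available there (sliding onto the escaping arc and then launching along $X$ or $Y$ at a point of one's choosing) reaches every point of $\Lambda$. You have enlarged the hub from the point $p$ to the arc $\Sigma^{e}$, and that enlargement is exactly where a genuine gap appears. The Filippov field $Z^{\Sigma}$ on $\Sigma^{e}$ is single-valued and points strictly leftward: its first component at $(x,0)$ is $1-3/(2(1-x^{2}))\in(-2,-1/2)$ for $x^{2}<1/2$. Hence, from a fixed $p'=(x_{p'},0)\in\Sigma^{e}$, the instruction ``slide along $\Sigma^{e}$ to a chosen point $(x_{e},0)$'' only produces points with $x_{e}\leq x_{p'}$; you cannot let $x_{e}$ range over all of $\Sigma^{e}$. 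The two families launched from $p'$ then miss every $X$-parabola with apex in $(0,x_{p'}^{2})\cup(1-x_{p'}^{2},1)$ and every lobe orbit that meets $\Sigma^{e}\cup\Sigma^{s}$ at distance less than $|x_{p'}|$ from the origin, so they do not cover $\Lambda$, and your ``second and crucial fact'' is not established by the construction you describe.

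The repair is short and turns your argument into the paper's. In forward time $\Sigma^{e}$ can only be entered by sliding through its right endpoint $p$ (on $\Sigma^{e}$ one has $X.f>0$ and $Y.f<0$, so no forward $X$- or $Y$-orbit lands there); in particular the trajectories produced in your first step already pass through $p$. So concatenate at $p$ rather than at an arbitrary point of $\Sigma^{e}$: from $p$, forward sliding sweeps the entire arc $\Sigma^{e}$, every choice of $x_{e}$ becomes admissible, and your coverage computation (apexes $x_{e}^{2}\in(0,1/2)$ from option (i), apexes $1-x_{e}^{2}\in(1/2,1)$ and the two lobes from option (ii), plus the boundary cycle obtained by following $Y$ from the visible fold $p$) does exhaust $\Lambda$. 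Equivalently, from $p'$ launch along $X$, land on $\Sigma^{s}$ at $(-x_{p'},0)$, slide left through $p$, and re-enter $\Sigma^{e}$. A final caveat: the orbit formula you quote for $Y$ (level curves of $y-(x^{2}-x^{4})/2$, i.e.\ the paper's literal $Y=(-2,4x^{3}-2x)$) contradicts the dynamics you then use; for that field the escaping arc is $(-1,-1/\sqrt{2})$, the two middle arcs are sewing, and $\Lambda$ is not even invariant, since orbits cross the lower boundary. The dynamics you actually use (escaping on $(-1/\sqrt{2},0)$, sliding on $(0,1/\sqrt{2})$, $Y$-orbits diving into the lobes) is the intended one of the paper's figure and corresponds to $Y=(-2,2x-4x^{3})$, whose orbits $y=(x^{4}-x^{2})/2+c$ have the lower boundary of $\Lambda$ as the orbit $c=0$; your proof should be written for that field.
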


The previous lemma says that any two points in $\Lambda$ can be connected by some positive global trajectory. Its proof is straighfoward if we observe that a global trajectory of any point in $\Lambda$ meets $p$ for some time $t^{*}$, as the authors argued in \cite{BCE}. Now we prove Theorem \ref{teorema caos}.

\begin{proof}[Proof of Theorem \ref{teorema caos}]
In order to prove that the PSVF Z is topologically transitive on $\Lambda$, we observe that $\Lambda$ is compact and invariant since it is mi\-ni\-mal (see Proposition 1 of \cite{BCE}). Now consider nonempty open sets $U$ and $V$ in $\Lambda$. Since $U$ and $V$ are nonempty, there exist at least an element $\lambda_{1}$ in $U$ and another one $\lambda_{2}$ in $V$. By Lemma \ref{lemma caos}, there exist a positive global trajectory $\Gamma^{+}(t,\lambda_{1})$ passing through $\lambda_{1}$ and $t_{0}>0$ such that $\Gamma^{+}(t_{0},\lambda_{1})=\lambda_{2}\in V$. Consequently the PSVF $Z$ is topologically transitive on the invariant set $\Lambda$.

\smallskip

Now we prove that $Z$ exhibits sensitive dependence on $\Lambda$. Indeed, take $m=diam(\Lambda)$ and consider $r=m/2>0$. Since $r<m$ then there exists two elements $a$ and $b$ in $\Lambda$ such that $d(a,b)>r$. Now consider $x\in\Lambda$, $\varepsilon>0$ and fix $y\in B_{\varepsilon}(x)\cap\Lambda$. By Lemma \ref{lemma caos} there exist positive global trajectories $\Gamma^{+}_{Z}(t,x)$ of $x$ and $\Gamma^{+}_{Z}(t,y)$ of $y$ and numbers $t_{1},t_{2}>0$ such that $\Gamma^{+}_{Z}(t_{1},x)=a$ and $\Gamma^{+}_{Z}(t_{2},y)=b$. Then $d_{H}(\Gamma_{Z}^{+}(t_{1},x),\Gamma_{Z}^{+}(t_{2},y))=d(a,b)>r$ and consequently $Z$ exhibits sensitive dependence on $\Lambda$. Thus the planar PSVF $Z$ is chaotic on the invariant compact set $\Lambda$.
\end{proof}

We should notice that apart of topologically transitive and sensitive dependence, the classical definition of chaos given by Devaney in \cite{Devaney} demands also that periodic trajectories of the considered system are dense in $\Lambda$. However, system $Z$ exhibited in Theorem \ref{teorema caos} also present such property, as we can see in Theorem \ref{orbitas periodicas densas}. Before that, we introduce the notion of periodic trajectory for PSVF. Actually, it is analogous to the definition of periodic trajectory for smooth systems.

\begin{definition}
Let $\Gamma_{Z}(t,q)$ a global trajectory of the PSVF $Z=(X,Y)$. We say that $\Gamma_{Z}$ is periodic if $\Gamma_{Z}$ is periodic in the variable $t$, i.e., if there exist $T>0$ such that $\Gamma_{Z}(t+T,q)=\Gamma_{Z}(t,p)$, for all $t\in\mathbb{R}$.
\end{definition}

\begin{theorem}\label{orbitas periodicas densas}
Consider $Z$ and $\Lambda$ as presented in Theorem \ref{teorema caos}. The periodic trajectories of $Z$ are dense in $\Lambda$.
\end{theorem}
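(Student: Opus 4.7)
The plan is to prove the stronger statement that every point of $\Lambda$ actually lies on a periodic trajectory of $Z$; this trivially forces the periodic trajectories to be dense in $\Lambda$. The main tool is Lemma \ref{lemma caos}.

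Fix an arbitrary $q \in \Lambda$. First I would apply Lemma \ref{lemma caos} to the pair $(q, q)$ (the statement is for any $x, y \in \Lambda$, so it applies when $x = y$): this produces a positive global trajectory $\Gamma^{+}(t, q)$ passing through $q$ and a time $T > 0$ with $\Gamma^{+}(T, q) = q$. Geometrically, such a closed loop based at $q$ exists because, as in the sketch supplied for Lemma \ref{lemma caos}, every trajectory of $Z$ in $\Lambda$ meets the distinguished point $p = (0,0)$ in finite time, and from $p$ one has sufficiently many admissible local continuations (via $X$, $Y$ or $Z^{\Sigma}$, as permitted by Definition \ref{definicao trajetorias}) to land back on $q$.

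The next step is to promote this closed loop into a genuinely periodic global trajectory in the sense of Definition \ref{definicao orbita}. Writing the restriction of $\Gamma^{+}$ to $[0,T]$ as a finite concatenation of preserving-orientation local trajectories $\sigma_{0}, \ldots, \sigma_{n}$ with matching endpoints, I would define
\[
\widetilde{\Gamma}(t, q) = \Gamma^{+}(t - kT, q), \qquad t \in [kT, (k+1)T],\ k \in \Z,
\]
extending the loop periodically to all of $\R$. The concatenation point at each $t = kT$ carries the value $q$ from both sides, so the matching condition $\sigma_{i}(t_{i+1}, p_{i}) = \sigma_{i+1}(t_{i+1}, p_{i+1})$ is preserved across periods, and the breakpoint times $t_{i}$ clearly tend to $\pm \infty$. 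Hence $\widetilde{\Gamma}$ is a valid global trajectory satisfying $\widetilde{\Gamma}(t+T, q) = \widetilde{\Gamma}(t, q)$ for every $t \in \R$, i.e., a periodic trajectory of period $T$ through $q$.

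The only delicate point, which I expect to be the main obstacle, is the right to re-use the same sequence of local trajectories on every period. This is legitimate because Definition \ref{definicao trajetorias} permits multiple admissible continuations at the multi-valued pieces of the dynamics (sewing arcs, escaping region, regular tangencies), so the specific choices made on $[0, T]$ may simply be repeated on every $[kT, (k+1)T]$ without violating either the orientation condition or the matching condition in Definition \ref{definicao orbita}. Since $q \in \Lambda$ was arbitrary, every point of $\Lambda$ lies on a periodic trajectory of $Z$, and the density claim follows at once.
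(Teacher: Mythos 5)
Your proposal is correct and follows essentially the same route as the paper: both prove the stronger claim that every point of $\Lambda$ lies on a periodic trajectory by applying Lemma \ref{lemma caos} with $x=y$ to obtain a closed loop, and then building a periodic global trajectory by repeating that same loop (the same choice of local trajectories) over every period, exactly as the paper does by setting $\sigma_i=\sigma_0$ for all $i\in\Z$. Your explicit discussion of why the same admissible continuations may be re-used on each period is a welcome elaboration of a point the paper leaves implicit, but it is not a different argument.
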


\begin{proof}[Proof]
The proof is completed if we show that for every point $x\in\Lambda$ passes a periodic trajectory of $Z$. In order to see that, consider $\sigma_{0}$ the closed arc connecting point $x$ with itself ($\sigma_{0}\neq\{x\}$). The existence of such arc is due to Lemma \ref{lemma caos}. Then the global trajectory
$$
\Gamma_{Z}(t,x)=\displaystyle\bigcup_{i\in\mathbb{Z}}\{\sigma_{i}(t,x);t_{i}\leq t\leq t_{i+1}\}
$$
satisfying $\sigma_{i}=\sigma_{0}$ for all $i\in\mathbb{Z}$ is $t_{1}$-periodic and passes through $x$. Observe that $\sigma_{i}(kt_{1},x)=x$, for all $k\in\mathbb{Z}$ and for all $i\in\mathbb{Z}$.
\end{proof}



The concepts of chaos in PSVF are also discussed through the next section, where we present new examples of chaotic PSVF and an interesting relation between chaotic PSVF and orientable minimality.

\subsection{Orientable Minimality and Chaos}\label{classes_minimais}

In the recent theory of PSVFs there exist a lot of e\-xam\-ples where the dynamics in the non-smooth context is richer than their smooth analogous. It happens basically due to the non existence of a theorem that assure the uniqueness of a trajectory crossing the switching manifold. In fact, the orbit passing through a sliding or escaping segment on the switching manifold can run out from such segment when the time goes to future or past. Supported by these facts, in the present section we introduce some definitions concerning $Z$-minimal sets by distinguishing invariance for positive and negative global trajectories. That is what we call \textit{orientable minimality}. The advantage by taking into account such approach is to consider some interesting sets that are not properly $Z$-minimal but also present invariance and compactness in some sense.

\smallskip

In addition, we will verify the existence of some $Z$-minimal sets that do not have a \textit{canard structure}, i.e., coincidence of a visible and an invisible tangencies separating a sliding from an escaping region on the switching manifold. In particular, it means that the conditions in order to find a non-trivial $Z$-minimal set are not so strong as the situation when the canard phenomena happen. Finally, we must observe that some of these new objects also present a chaotic behaviour. This emphasizes that, in PSVFs, systems having non-trivial $Z$-minimal sets and chaotic behavior on them have some intersection, as we can see in Theorem \ref{chaos_minimality}.

\smallskip

In what follows we present new definitions on invariance and minimality. Then we compare some special sets taking into account such definitions.

\smallskip

\begin{definition}\label{invariancia+-}
A set $A\subset\mathbb{R}^{2}$ is $\mathbf{Z}$-\textbf{positively} (respectively, \textbf{negatively}) \textbf{invariant} if for each $p\in A$ and all positive global trajectory $\Gamma_{Z}^{+}(t,p)$ (respectively, negatively global trajectory $\Gamma_{Z}^{-}(t,p)$) passing through $p$ it holds $\Gamma_{Z}^{+}(t,p) \subset A$ (respectively, $\Gamma_{Z}^{-}(t,p) \subset A$).
\end{definition}

\smallskip

\begin{remark}\label{remark_a}
It follows directly from Definiton \eqref{invariancia+-} that a given set is invariant if and only if it is $Z$-positively and $Z$-negatively invariant.
\end{remark}

\begin{definition}\label{minimal+-}
Consider $\mathbf{Z \in \Omega}$. A set $M\subset\mathbb{R}^{2}$ is $Z$-\textbf{positively} (respectively, $Z$-\textbf{negatively}) \textbf{minimal} if
\begin{enumerate}
\item[(i)] $M\neq\emptyset$;
\item[(ii)] $M$ is compact;
\item[(iii)] $M$ is $Z$-positively (respectively, $Z$-negatively) invariant;
\item[(iv)] $M$ does not contain proper subset satisfying (i), (ii) and (iii).
\end{enumerate}
\end{definition}

The following lemma is a trivial consequence of Definition \ref{minimal+-}.

\begin{lemma}\label{lema_a}
Consider $M\in\mathbb{R}^{2}$ and $Z$ a PSVF. If $M$ is $Z$-positively minimal and $Z$-negatively minimal, then $M$ is $Z$-minimal.
\end{lemma}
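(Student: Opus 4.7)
The plan is to verify directly each of the four clauses of Definition \ref{definicao minimal Z}, using Remark \ref{remark_a} to convert combined positive and negative invariance into full $Z$-invariance, and then obtaining clause (iv) by a short contradiction argument that passes any putative strictly smaller invariant subset through the positive-minimality hypothesis.

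First, I would note that clauses (i) and (ii) are immediate: since $M$ is $Z$-positively minimal (or, for that matter, $Z$-negatively minimal), $M$ is nonempty and compact by clauses (i) and (ii) of Definition \ref{minimal+-}. For clause (iii) of Definition \ref{definicao minimal Z}, I would invoke Remark \ref{remark_a} directly: the hypothesis says that $M$ is $Z$-positively invariant and $Z$-negatively invariant, so every global trajectory $\Gamma_{Z}(t,p)$ through a point $p\in M$ decomposes into a positive global trajectory $\Gamma_{Z}^{+}(t,p)$ and a negative global trajectory $\Gamma_{Z}^{-}(t,p)$, each contained in $M$, whence $\Gamma_{Z}(t,p)\subset M$, i.e.\ $M$ is $Z$-invariant.

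For clause (iv), I would argue by contradiction. Suppose $N\subsetneq M$ is a nonempty compact $Z$-invariant proper subset. By Remark \ref{remark_a} again, $N$ is in particular $Z$-positively invariant. Then $N$ is a nonempty compact $Z$-positively invariant subset of $M$ properly contained in $M$, which contradicts clause (iv) in the definition of $Z$-positive minimality of $M$. (Symmetrically, one could use the $Z$-negative minimality.) Hence no such $N$ exists and $M$ satisfies clause (iv) of Definition \ref{definicao minimal Z}, completing the proof.

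I do not expect any real obstacle: the lemma is essentially a bookkeeping consequence of Remark \ref{remark_a} combined with the observation that minimality with respect to a weaker invariance notion is a stronger condition (fewer candidate invariant subsets are forbidden when we ask only for positive invariance, so minimality in that weaker sense suffices to rule out smaller fully invariant subsets). The only subtle point to watch is that the global trajectory in Definition \ref{definicao orbita} is genuinely the concatenation of a positive and a negative global trajectory meeting at the base point, so that the decomposition used in step~(iii) is legitimate for the set-valued flow; this is immediate from the definition of $\Gamma_Z$ via the union indexed over $i\in\Z$.
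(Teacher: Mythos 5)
Your proof is correct and takes essentially the same route as the paper's own (much terser) argument: both obtain nonemptiness and compactness directly from the hypotheses, use Remark \ref{remark_a} to pass from combined positive and negative invariance to full $Z$-invariance, and rule out proper nonempty compact $Z$-invariant subsets by noting that any such subset would in particular be $Z$-positively invariant, contradicting clause (iv) of $Z$-positive minimality. Your version merely makes explicit the contradiction step that the paper leaves implicit, so there is nothing to correct.
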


\begin{proof}[Proof]
In fact, since $M$ is $Z$-positively minimal and $Z$-negatively minimal, then $M$ is a non-empty compact set and from Remark \ref{remark_a} $M$ is $Z$-invariant and does not contain a proper non-empty compact $Z$-invariant subset.
\end{proof}

Throughout this paper we will see that the converse of Lemma \ref{lema_a} does not holds.

\smallskip

We stress that a canard cycle of kind III is a closed $Z$-trajectory with, at least, a tangency point of $X$ or $Y$ with the switching
manifold (for a precise definition, see Definition 5 of \cite{BCE}). A pseudo-equilibria is any point $q$ such that $Z^{\Sigma}(q)=0$, where $Z^{\Sigma}$ represents the sliding vector field associated to $Z$.

\begin{example}\label{exemplo1}
We know that canard cycle of kind III and pseudo-equilibria are positive or negative invariants compact sets, but they are not positive and negative at the same time. It holds due to Definition \ref{definicao trajetorias} of local trajectories. Consequently they are $Z$-positively minimal or $Z$-negatively minimals, depending on the orientation of the trajectories, but not $Z$-minimal.

\begin{figure}[h!]
\begin{minipage}[b]{0.35\linewidth}
\begin{center}
\psfrag{A}{$\Gamma$}\psfrag{S}{$\Sigma$}
\epsfxsize=5cm  \epsfbox{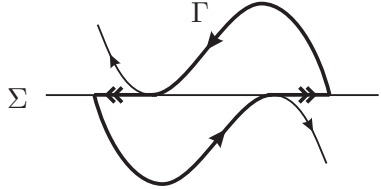}
\end{center}
\end{minipage}
\caption{\small{Canard cycle of kind III.}}\label{fig canard}
\end{figure}
\end{example}

In \cite{BCE} is exhibited examples of $Z$-minimal sets of PSVFs. In those examples it is always verified the occurrence of fold-fold singularities (which is characterized by the coincidence of a fold point of $X$ and a fold point of $Y$); however, at the next example of a $Z$-minimal set this fact is not required.

\begin{example}\label{exemplo2}
Consider $Z_{1}=(X,Y) \in \Omega$, where $X(x,y)=(1, - 2 x +1)$, $Y(x,y)=(-1,(-2 + x) (-22 + x (-7 + 4 x)))$ and $\Sigma=f^{-1}(0)=\{(x,y)\in\mathbb{R}^{2};y=0\}$. The parametric equation for the integral curves of $X$ and $Y$ with initial conditions $(x(0),y(0))=(0,k_{+})$ and $(x(0),y(0))=(0,k_{-})$, respectively, are known and their algebraic expressions are given by $y=-(-4 + x) (3 + x) + k_{+}$ and $y=(-4 + x) (-2 + x)^2 (3 + x) + k_{-}$, respectively. It is easy to see that $p=(1/2,0)$ is an invisible tangency point of $X$, $q=(2,0)$ is a visible tangency point of $Y$ and the points $p_{\pm}=((7 {\pm}\sqrt{401})/8,0)$ are both invisible tangency points of $Y$. Note that, in $\Sigma$, between $p_{-}$ and $p$ there exists an escaping region with a repeller pseudo equilibrium $\widetilde{p}$ on it and between $q$ and $p_{+}$ there exists a sliding region. Further, every point between $(-3,0)$ and $p_{-}$ or between $p_{+}$ and $q$ belong to a sewing region. Consider now the particular trajectories of $X$ and $Y$ for the cases when $k_{+}=0$ and $k_{-}=0$, respectively. These particular curves delimit a bounded region of plane that we call $\Lambda_{1}$. Figure \ref{fig minimal 1} summarizes these facts.
\end{example}

Example \ref{exemplo2} leads to the next proposition.

\begin{figure}[!h]
\begin{center}
\psfrag{A}{$p_-$}\psfrag{B}{$\widetilde{p}$}\psfrag{C}{$p$}\psfrag{D}{$q$}\psfrag{E}{$p_+$}\psfrag{S}{$\Sigma$}
 \epsfxsize=9cm
\epsfbox{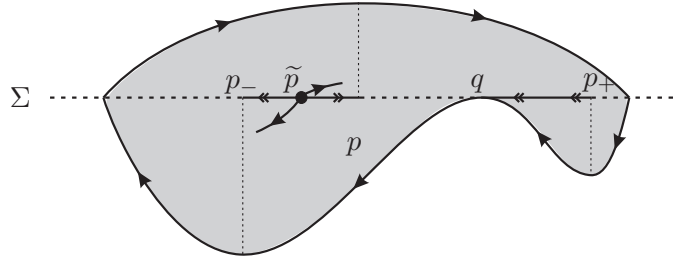}
\caption{\small{$Z_{1}$-minimal set $\Lambda_{1}$. $\Lambda_{1}$ is neither $Z_{1}$-positively minimal nor $Z_{1}$-negatively minimal.}} \label{fig minimal 1}
\end{center}
\end{figure}


\begin{proposition}\label{proposicao exemplo 2} Consider $Z_{1}=(X,Y) \in \Omega$, where $X(x,y)=(1, - 2 x +1)$, $Y(x,y)=(-1,(-2 + x) (-22 + x (-7 + 4 x)))$ and $\Sigma=f^{-1}(0)=\{(x,y)\in\mathbb{R}^{2};y=0\}$. The set
\begin{equation}\label{equacao conjunto minimal}
\begin{array}{ll}
\Lambda_{1}= & \{(x,y)\in\mathbb{R}^{2};-3\leq x\leq 4\;\; \mbox{and}\;\; \\
& (-4 + x) (-2 + x)^2 (3 + x)\leq y\leq -(-4 + x) (3 + x) \}.
\end{array}
\end{equation}
 is $Z_{1}$-minimal but it is neither $Z_{1}$-positively minimal nor $Z_{1}$-negatively minimal.
\end{proposition}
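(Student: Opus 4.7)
The plan is to establish the three claims of the proposition in turn: (A) that $\Lambda_1$ is $Z_1$-minimal; (B) that it is not $Z_1$-positively minimal; and (C) that it is not $Z_1$-negatively minimal. For (A) I will verify the four conditions of Definition \ref{definicao minimal Z}; for (B) and (C) I will exhibit a proper non-empty compact $Z_1$-positively (respectively $Z_1$-negatively) invariant subset of $\Lambda_1$, which defeats clause (iv) of Definition \ref{minimal+-}.

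For (A), non-emptiness and compactness of $\Lambda_1$ are immediate. To check $Z_1$-invariance I would observe that the topological boundary $\partial\Lambda_1$ consists of the $X$-integral-curve $\gamma_X=\{y=-(-4+x)(3+x)\}$ in $\Sigma^+$ (joining $(-3,0)$ to $(4,0)$) and the $Y$-integral-curve $\gamma_Y=\{y=(-4+x)(-2+x)^2(3+x)\}$ in $\Sigma^-$ (tangent to $\Sigma$ at the visible tangency $q=(2,0)$). In $\Sigma^+\cap\mathrm{int}(\Lambda_1)$ the $X$-orbits are the parabolas $y=-x^2+x+k$ with $0<k<12$; each stays strictly below $\gamma_X$ and meets $\Sigma$ at two points lying in $(-3,4)$, so it remains in $\Lambda_1$ until it reaches $\Sigma$. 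An analogous argument using $Y$-integral-curves handles $\Sigma^-\cap\mathrm{int}(\Lambda_1)$. On $\Sigma\cap\Lambda_1=[-3,4]\times\{0\}$ the partition into sewing, escaping and sliding regions from Example \ref{exemplo2} together with Definition \ref{definicao trajetorias} shows that every local trajectory starting there continues inside $\Lambda_1$; in particular the sliding/escaping field $Z_1^\Sigma$ preserves the interval $[-3,4]$. For clause (iv) I would adapt the hub-point strategy underlying Lemma \ref{lemma caos}: every global trajectory in $\Lambda_1$ can be extended so as to visit the visible tangency $q$, and conversely starting from $q$ one reaches any prescribed point of $\Lambda_1$ by a suitable concatenation of $X$- and $Y$-arcs, sewing crossings and sliding/escaping segments; therefore any non-empty closed $Z_1$-invariant subset of $\Lambda_1$ contains $q$ and must equal $\Lambda_1$.

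For (B), the horizontal component of $Z_1^\Sigma$ restricted to the sliding interval of $\Sigma\cap\Lambda_1$ has a zero $\widetilde{q}$, the sliding pseudo-equilibrium. By Definition \ref{definicao trajetorias} the only positive local trajectory at a sliding point is $\phi_{Z_1^\Sigma}$, and $Z_1^\Sigma(\widetilde{q})=0$, so every positive global trajectory through $\widetilde{q}$ equals $\{\widetilde{q}\}$; hence $\{\widetilde{q}\}$ is a proper, non-empty, compact $Z_1$-positively invariant subset of $\Lambda_1$. Dually for (C), the repelling escaping pseudo-equilibrium $\widetilde{p}$ in $\Sigma\cap\Lambda_1$ from Example \ref{exemplo2} is a zero of $Z_1^\Sigma$; backward local trajectories at an escaping point are uniquely $\phi_{Z_1^\Sigma}$, and so $\{\widetilde{p}\}$ is a proper non-empty compact $Z_1$-negatively invariant subset.

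The main obstacle will be clause (iv) in part (A): one must show that, in spite of the ``traps'' $\widetilde{q}$ for positive and $\widetilde{p}$ for negative trajectories, neither singleton is $Z_1$-invariant (in backward time $\widetilde{q}$ emits $X$- and $Y$-orbits, and in forward time $\widetilde{p}$ does likewise), and that this multi-valuedness is sufficient to produce, through every point of $\Lambda_1$, a global trajectory that visits $q$. This is similar in spirit to the argument in \cite{BCE}, but the asymmetry of $\gamma_Y$ and the intermediate tangency at $q$ force a careful case analysis on the landing regions of the $X$- and $Y$-orbits.
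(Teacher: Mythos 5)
Your parts (A) and (C) are essentially the paper's own proof: (A) is the same hub argument through the visible tangency $q$ (every point of $\Lambda_1$ is joined to $q$ by global trajectories, so a nonempty compact $Z_1$-invariant subset is forced to be all of $\Lambda_1$), and (C) uses exactly the paper's witness, the singleton $\{\widetilde{p}\}$, which is $Z_1$-negatively invariant because backward trajectories at escaping points are uniquely the sliding flow and $Z_1^{\Sigma}(\widetilde{p})=0$. The genuine gap is in (B). There everything rests on a zero $\widetilde{q}$ of $Z_1^{\Sigma}$ on the sliding segment $(q,p_+)$, whose existence you simply assert. Nothing in Example \ref{exemplo2} supplies such a point: the example exhibits only the escaping pseudo-equilibrium $\widetilde{p}$, and a sliding segment need not carry any pseudo-equilibrium at all; for instance, for the system of Theorem \ref{teorema caos} the horizontal component of $Z^{\Sigma}$ on its sliding segment equals $(2x^2-3)/(2x^2)$, which never vanishes there. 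So (B) hangs on an unproved and unstated claim.

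Worse, that claim is incompatible with your own part (A). At sliding points the forward trajectory is uniquely $\phi_{Z_1^{\Sigma}}$, so if $Z_1^{\Sigma}(\widetilde{q})=0$ then the forward trajectory of $\widetilde{q}$ is $\{\widetilde{q}\}$ and never reaches the hub $q$; consequently the connectivity that (A) needs (every positive trajectory can be continued so as to pass through $q$ -- this is precisely how the paper argues, sending the $X$-orbit of $p_-$ to its landing point $p_{-}^{1}$ on the sliding segment and then sliding into $q$) fails exactly when the witness of (B) exists. Even if both conclusions could simultaneously be true, your two arguments cannot both be valid for the same field. The paper is consistent on this point because its witness for (B) is a different set: $\partial\Lambda_1$, which is proper, compact and $Z_1$-positively invariant, since forward continuations along the boundary are unique -- $(-3,0)$ and $(4,0)$ are sewing points, and at $q$ the only admissible forward continuation is the $Y$-arc of the boundary, the forward $X$-orbit leaving $\Sigma^{+}$ and the sliding vector at $q$ being $Y(q)$, which points out of the sliding segment. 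Replacing $\{\widetilde{q}\}$ by $\partial\Lambda_1$ repairs (B) without disturbing (A). (A separate caveat, which hurts the paper as much as your proposal: with $Y$ as printed the lower boundary curve is not a $Y$-orbit at all -- its second component needs a sign flip for Example \ref{exemplo2} to be self-consistent -- and for the sign-corrected field the function $X.f+Y.f$ genuinely does vanish on $(q,p_+)$; but that fact would destroy the hub argument itself, so it cannot be invoked to justify your $\widetilde{q}$.)
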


\begin{proof}[Proof]It is easy to see that $\Lambda_{1}$ is compact and has non-empty interior. Moreover, by Definition $\ref{definicao trajetorias}$, on $\partial \Lambda_{1} \setminus\{ q \}$ we have uniqueness of trajectory (here $\partial B$ means the boundary of the set $B$). Since the trajectory of $X$ passing through $p_-$ meets $\Sigma$ at $p_{-}^{1}=((1+\sqrt{401})/8,0)$, we conclude that the global trajectory of any point in $\Lambda_{1}$ meets $q$ for some time $t^{*}$. Since $q$ is a visible tangency point for $Y$, according to the fourth bullet of Definition \ref{definicao trajetorias} any trajectory passing through $q$ remain in $\Lambda_{1}$. Consequently $\Lambda_{1}$ is $Z$-invariant. Moreover, given $p_1, p_2 \in \Lambda_{1}$, with $p_2 \neq \widetilde{p}$ the positive global trajectory by $p_1$ reaches  $q$. The negative global trajectory by $p_2$ reaches $q$. So, there exists a global trajectory connecting $p_1$ and $p_2$. Now, let $\Lambda_{1}^{\prime} \subset \Lambda_{1}$ be a $Z$-invariant set. Given $q_1 \in \Lambda_{1}^{\prime}$ and $q_2 \in \Lambda_{1}$ since there exists a global trajectory connecting them we conclude that $q_2 \in \Lambda_{1}^{\prime}$. Therefore, $\Lambda_{1}^{\prime} = \Lambda_{1}$ and then $\Lambda_{1}$ is $Z_{1}$-minimal. Moreover, since $\partial \Lambda_{1}$ is $Z_{1}$-positively invariant, the set $\Lambda_{1}$ is not $Z_{1}$-positively minimal and since $\widetilde{p}$ is $Z_{1}$-negatively invariant the set $\Lambda_{1}$ is not $Z_{1}$-negatively minimal.
\end{proof}

\smallskip

\begin{example}\label{exemplo3}
The $Z$-minimal set presented in Theorem \ref{teorema caos} is also $Z$-positively minimal and $Z$-negatively minimal. The proof of this fact follows the same lines of the proof of Proposition \eqref{proposicao exemplo 2}.
\end{example}

\smallskip

The next example is a small variation of the Example \ref{exemplo3}.

\begin{example}\label{exemplo4}
Consider $Z_{2}$ a PSVF presenting the phase portrait exhibited in Figure \ref{fig minimal 3}. Here, there exists a compact set $\Lambda_2$ bounded by trajectories of $X$ and $Y$. As illustrated, $p_1$ and $p_3$ are invisible tangency points of $X$,  $p_2$ is a visible tangency point of $X$, $p_1$ and $p_3$ are visible tangency points of $Y$ and $p_0$, $p_2$ and $p_4$ are invisible tangency points of $Y$. It is easy to see that $\Lambda_2$ is $Z_{2}$-invariant and there is not a proper subset which is compact and $Z_{2}$-invariat. So, $\Lambda_2$ is $Z_{2}$-minimal. Assume that there exists a pseudo equilibrium $\widetilde{p}$ between $p_1$ and $p_2$. Following the orientation of the trajectories at Figure \ref{fig minimal 3} and the third bullet of Definition \ref{definicao trajetorias} we conclude that $\Lambda_2$ is not $Z_{2}$-negatively minimal since $\{ \widetilde{p} \}$ is a compact $Z_{2}$-negatively invariant set. Morevore, $\Lambda_2$  is $Z_{2}$-positively minimal since it is $Z_{2}$-positively invariant and it have not a compact proper subset which is $Z_{2}$-positively invariant.

\begin{figure}[!h]
\begin{center}
\psfrag{A}{$p_{0}$}\psfrag{B}{$p_{1}$}\psfrag{C}{$\widetilde{p}$}\psfrag{D}{$p_{2}$}\psfrag{E}{$p_{3}$}\psfrag{F}{$p_{4}$}\psfrag{S}{$\Sigma$}
 \epsfxsize=10cm
\epsfbox{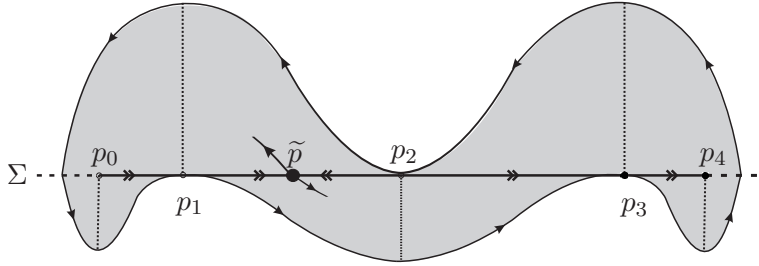} \caption{\small{The $Z_{2}$-minimal set $\Lambda_{2}$.}} \label{fig minimal 3}\end{center}
\end{figure}


\end{example}

\begin{proposition}\label{proposicao exemplo 4} Consider the notation of Example \ref{exemplo4}.  The set $\Lambda_2$
 is $Z_{2}$-minimal and also $Z_{2}$-positively minimal but not $Z_{2}$-negatively minimal for this PSVF.
\end{proposition}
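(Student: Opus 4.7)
The plan is to follow the same strategy as in the proof of Proposition \ref{proposicao exemplo 2}, decomposing the statement into three independent claims: (a) $\Lambda_2$ is $Z_2$-minimal, (b) $\Lambda_2$ is $Z_2$-positively minimal, and (c) $\Lambda_2$ is not $Z_2$-negatively minimal. The tangency structure described in Example \ref{exemplo4}---visible tangencies of $Y$ at $p_1,p_3$, a visible tangency of $X$ at $p_2$, an escaping region containing the pseudo-equilibrium $\widetilde{p}$, and $\partial \Lambda_2$ built from orbit arcs of $X$ and $Y$---supplies all the ingredients needed.

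For (a) I would first observe that $\Lambda_2$ is compact by construction and then argue $Z_2$-invariance by the mechanism used in Proposition \ref{proposicao exemplo 2}: off the tangency points uniqueness of trajectories keeps the boundary invariant, and at each visible tangency the fourth bullet of Definition \ref{definicao trajetorias} allows only trajectory branches that remain in $\Lambda_2$. To rule out proper nonempty compact $Z_2$-invariant subsets I would prove a reachability statement analogous to Lemma \ref{lemma caos}: any two points of $\Lambda_2$ are joined by a global trajectory. The key mechanism is that every trajectory in $\Lambda_2$ can be continued until it reaches one of the visible tangencies (say $p_2$), where non-uniqueness together with the escaping segment around $\widetilde{p}$ allows redirection to any prescribed outgoing orbit. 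Then, mimicking Proposition \ref{proposicao exemplo 2}, if $\Lambda_2'\subset\Lambda_2$ is nonempty, compact and $Z_2$-invariant, choosing $q_1\in\Lambda_2'$ and any $q_2\in\Lambda_2$ and connecting them by a global trajectory forces $q_2\in\Lambda_2'$, hence $\Lambda_2'=\Lambda_2$.

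For (b) I would upgrade the reachability argument to positive global trajectories: the forward freedom at the visible tangencies and at points of $\Sigma^e$ (third and fourth bullets of Definition \ref{definicao trajetorias}) allow forward-connecting any two points of $\Lambda_2$, so any proper compact $Z_2$-positively invariant subset must coincide with $\Lambda_2$. Claim (c) is the cleanest: since $\widetilde{p}$ is a pseudo-equilibrium lying on $\Sigma^e$, the third bullet of Definition \ref{definicao trajetorias} forces every local negative trajectory through $\widetilde{p}$ to coincide with $\phi_{Z^\Sigma}(t,\widetilde{p})$, which is constantly $\widetilde{p}$ because $Z^\Sigma(\widetilde{p})=0$. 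Therefore the constant orbit is the only negative global trajectory through $\widetilde{p}$, so $\{\widetilde{p}\}$ is a nonempty compact $Z_2$-negatively invariant proper subset of $\Lambda_2$, violating condition (iv) of Definition \ref{minimal+-} in the negative sense.

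The step I expect to be the main obstacle is the precise verification that positive reachability holds while negative reachability does not. The asymmetry is entirely encoded in the third bullet of Definition \ref{definicao trajetorias}: at $\widetilde{p}\in\Sigma^e$ a positive trajectory may freely choose $\phi_X$, $\phi_Y$ or $\phi_{Z^\Sigma}$, whereas a negative trajectory is obliged to follow $Z^\Sigma$ and is therefore trapped at $\widetilde{p}$. Making this dichotomy explicit, and carefully tracking which bullet of Definition \ref{definicao trajetorias} applies at each visible tangency and at each point of the escaping region, is the delicate part of the argument.
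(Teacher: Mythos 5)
Your proposal is correct and follows essentially the same route as the paper: the paper's proof simply invokes the argumentation of Example \ref{exemplo4}, which establishes minimality and positive minimality via invariance plus the absence of proper compact (positively) invariant subsets, and disproves negative minimality by exhibiting $\{\widetilde{p}\}$ as a compact $Z_2$-negatively invariant set via the third bullet of Definition \ref{definicao trajetorias}. Your write-up just makes explicit the reachability argument (in the spirit of Lemma \ref{lemma caos} and Proposition \ref{proposicao exemplo 2}) that the paper leaves implicit behind the word ``straightforward.''
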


\begin{proof}[Proof]
Straightforward, following the argumentation on Example \ref{exemplo4}.
\end{proof}

The next remark is an analogous of Proposition \ref{proposicao exemplo 4} by considering the opposite orientation of the time.

\begin{remark}\label{ultimo remark}
Consider a PSVF presenting the phase portrait exhibited in Figure \ref{fig minimal 3} with opposite orientation and the notation of Example \ref{exemplo4}. Then, following the same ideas of this example, we obtain that $\Lambda_2$
 is $Z_{2}$-minimal and $Z_{2}$-negatively minimal but not $Z_{2}$-negatively minimal for this PSVF.
\end{remark}

The previous propositions, examples and remarks of this subsection say that non-trivial minimality can occur in non symmetrical sets having no canard points. We must note that the example exhibited in \cite{J} of a chaotic planar system on a special set neither present  symmetry nor the set is minimal. Moreover, in our examples we can observe some similarity between symmetrical $Z$-minimal sets and systems that are $Z$-positively minimal and $Z$-negatively minimal simultaneously. Also, by observing Examples 1 to 4 we note that the presence of sliding and escaping regions on $\Sigma$ generates many different objects with very rich dynamics. We should note that the case where do not exist sliding or escape regions was studied in \cite{BCE} and was shown that in such case there are no non-trivial $Z$-minimal sets. In particular, we see that even considering different definitions of $Z$-minimal sets and non symmetrical sets having no canard points we can not generalize the classical Poincaré-Bendixson Theorem to the non-smooth context.

\smallskip

The following result indicates the presence of chaos in the systems studied in this section.

\begin{theorem}\label{teorema caos 2}
Consider the PSVF $Z_{2}$ and the set $\Lambda_{2}$ as presented in Proposition \ref{proposicao exemplo 4}. Then $Z_{2}$ is chaotic on $\Lambda_{2}$.
\end{theorem}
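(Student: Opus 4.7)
The plan is to mimic, almost verbatim, the proof of Theorem \ref{teorema caos}. The engine of that proof was the connectivity statement of Lemma \ref{lemma caos}, which guaranteed that any two points of the minimal set can be joined by a positive global trajectory. Accordingly, the first step is to establish the analogous lemma for the present setting: for every $x,y\in\Lambda_2$ there exist a positive global trajectory $\Gamma^{+}_{Z_2}(t,x)$ passing through $x$ and a time $t_0>0$ such that $\Gamma^{+}_{Z_2}(t_0,x)=y$.

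Granting this connectivity property, topological transitivity and sensitive dependence on $\Lambda_2$ follow exactly as in the proof of Theorem \ref{teorema caos}. For transitivity, given nonempty open sets $U,V\subset\Lambda_2$, pick $\lambda_1\in U$ and $\lambda_2\in V$ and apply the lemma to produce a positive global trajectory through $\lambda_1$ that reaches $\lambda_2$ at some time $t_0>0$. For sensitive dependence, set $r=\mathrm{diam}(\Lambda_2)/2$ and choose $a,b\in\Lambda_2$ with $d(a,b)>r$ (which exist since $r<\mathrm{diam}(\Lambda_2)$); then for any $x\in\Lambda_2$, any $\varepsilon>0$ and any $y\in B_\varepsilon(x)\cap\Lambda_2$, the lemma produces positive global trajectories $\Gamma^{+}_{Z_2}(t,x)$ and $\Gamma^{+}_{Z_2}(t,y)$ passing through $a$ and $b$ respectively, yielding $d_H(\Gamma^{+}_{Z_2}(t,x),\Gamma^{+}_{Z_2}(t,y))\geq d(a,b)>r$. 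By Definition \ref{definicao caotico} this proves $Z_2$ is chaotic on $\Lambda_2$.

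The real work is the connectivity lemma. Here I would follow the strategy used in the proof of Proposition \ref{proposicao exemplo 4}. By inspection of the phase portrait (Figure \ref{fig minimal 3}), $\Lambda_2$ is bounded by arcs of the flows of $X$ and $Y$ that return to $\Sigma$, so every positive global trajectory starting in $\Lambda_2$ must reach $\Sigma$ in finite time and, in particular, must hit either the escaping region containing $\tilde{p}$ or one of the sliding regions. Once on $\Sigma$, the visible tangency points $p_1,p_2,p_3$ together with the escaping/sliding dynamics act as branching points: by the third and fourth bullets of Definition \ref{definicao trajetorias} a positive global trajectory passing through them may continue as $\phi_X$, $\phi_Y$, or $\phi_{Z_2^\Sigma}$. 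This multi-valuedness, together with the fact that the escaping region emits trajectories into both components of $\Lambda_2\setminus\Sigma$, provides enough freedom to steer a positive orbit from any starting point $x$ to any prescribed target $y$.

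The main obstacle will be the combinatorial bookkeeping in this last step, since unlike the symmetric situation of Theorem \ref{teorema caos} there is no single hub point through which every trajectory passes. The argument must distribute into cases according to whether $y$ lies in $\Sigma^c$, $\Sigma^s$, $\Sigma^e$, on $\partial\Lambda_2$ or in the interior of one of the two components of $\Lambda_2\setminus\Sigma$, and in each case exhibit an admissible concatenation of local trajectories, consistent with Definition \ref{definicao trajetorias}, that begins at $x$ and terminates at $y$. The pseudo-equilibrium $\tilde{p}$ deserves special attention: as it lies in an escaping region it is a source for positive trajectories, so we only need to ensure that the target $y$ is reachable from a suitable nearby departure from $\tilde{p}$, and not the reverse.
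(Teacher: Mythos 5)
Your proposal takes exactly the paper's route: the paper's entire proof is the one-sentence remark that Theorem \ref{teorema caos 2} ``follows the same lines of the proof of Theorem \ref{teorema caos} by using a similar result to Lemma \ref{lemma caos}'' for $\Lambda_{2}$, which is precisely your reduction of chaoticity to a connectivity lemma followed by the verbatim transitivity and sensitivity arguments. You in fact supply more detail than the paper (which never proves, nor even states, the analogous lemma), and your closing caveat---that the connectivity statement must treat the pseudo-equilibrium $\widetilde{p}$ asymmetrically, as a source rather than a reachable target, so the blanket ``for every $x,y\in\Lambda_{2}$'' version stated at the top of your proof needs that exclusion---is a refinement the paper silently omits.
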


\begin{proof}[Proof]
The proof of Theorem \ref{teorema caos 2} follows the same lines of the proof of Theorem \ref{teorema caos} by using a similar result to Lemma \ref{lemma caos} for the $Z$-minimal set $\Lambda_{2}$.
\end{proof}

One should note that Theorems \ref{teorema caos} and \ref{teorema caos 2} present examples of PSVFs that are chaotic on minimal sets. This fact suggests a relation between chaoticity and minimality in PSVF that we make clear in the following theorem.

\smallskip

In what follows we denote by $med(\cdot)$ the Lebesgue measure.

\begin{theorem}\label{chaos_minimality}
Let $Z$ be a planar PSVF and $\Lambda\subset\mathbb{R}^{2}$ a compact invariant set. If $\Lambda$ is $Z$-positively minimal and $Z$-negatively minimal satisfying $med(\Lambda)>0$, then $Z$ is chaotic on $\Lambda$.
\end{theorem}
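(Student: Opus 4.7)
The plan is to mirror the proof of Theorem \ref{teorema caos}, where chaos was derived from the pairwise connectivity statement of Lemma \ref{lemma caos}. In the present abstract setting I would replace pairwise connectivity by the density property
\[
(*)\qquad R^+(x):=\{q\in\Lambda\,:\,q=\Gamma_Z^+(t,x)\ \mbox{for some}\ \Gamma_Z^+\ \mbox{and some}\ t\geq 0\}\ \mbox{is dense in}\ \Lambda, \ \forall\,x\in\Lambda,
\]
together with the symmetric statement for $R^-$. Granted (*), topological transitivity is immediate: for nonempty open $U,V\subset\Lambda$, pick any $q\in U$ and use density of $R^+(q)$ to obtain a positive global trajectory from $q$ entering $V$. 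Sensitive dependence follows as in Theorem \ref{teorema caos}: the hypothesis $med(\Lambda)>0$ forces $diam(\Lambda)>0$, so one fixes $a,b\in\Lambda$ with $d(a,b)>0$, sets $r:=d(a,b)/3$, and for arbitrary $x\in\Lambda$ and $\varepsilon>0$ takes $y=x\in B_{\varepsilon}(x)\cap\Lambda$; two applications of (*) produce positive global trajectories from $x$ whose ranges contain points within distance $r/2$ of $a$ and $b$ respectively, and the triangle inequality then yields $d_H(\Gamma_x^+,\Gamma_y^+)>r$. Since $r<diam(\Lambda)$, the requirements of Definition \ref{definicao sensibilidade} are met.

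A first step toward (*) is that $R^+(x)$ is already $Z$-positively invariant by a concatenation argument: if $q\in R^+(x)$ is reached along some $\Gamma_Z^+$ at a time $t_q$, and $\Gamma_*$ is any positive global trajectory from $q$, then gluing the initial segment of $\Gamma_Z^+$ (up to $t_q$) with $\Gamma_*$ produces a new positive global trajectory from $x$ whose image contains $\Gamma_*$, hence $\Gamma_*\subset R^+(x)$. Consequently $\overline{R^+(x)}$ is a nonempty compact subset of $\Lambda$; if I can show it is also $Z$-positively invariant, then $Z$-positive minimality of $\Lambda$ forces $\overline{R^+(x)}=\Lambda$, giving (*). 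The symmetric argument with reversed orientation and $Z$-negative minimality yields the analogous density of $R^-$.

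The main obstacle is exactly the invariance of the closure. In Filippov systems the trajectory multiflow is only upper semicontinuous, not lower semicontinuous, so a prescribed positive global trajectory emanating from a boundary point $y\in\overline{R^+(x)}\setminus R^+(x)$ need not be approximated by trajectories through points of $R^+(x)$ accumulating at $y$, and closure invariance does not come for free from continuous-dependence considerations. This is where I expect both minimality hypotheses to be used simultaneously. Arguing by contradiction, suppose $W:=\Lambda\setminus\overline{R^+(x)}$ is nonempty and open, and pick $w\in W$; the symmetric density of $R^-(w)$ provided by $Z$-negative minimality yields points of $R^-(w)$ arbitrarily close to $x$, i.e.\ positive global trajectories from points near $x$ that reach $w$ in finite times $T_n$. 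A compactness argument controlling the $T_n$, combined with the upper semicontinuity of Filippov's multiflow, should then pass to the limit and produce a positive global trajectory from $x$ itself reaching $w$, so that $w\in R^+(x)$, contradicting $w\in W$. Closure invariance would follow, (*) would be established, and chaos on $\Lambda$ would be proved.
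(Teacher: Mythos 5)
Your overall architecture (a reachability statement feeding the transitivity and sensitivity arguments of Theorem \ref{teorema caos}) matches the paper's, but the heart of your proof, property $(*)$, is never established, and the argument you sketch for it is circular. To show $\overline{R^+(x)}=\Lambda$ you invoke ``the symmetric density of $R^-(w)$ provided by $Z$-negative minimality''; that density is precisely the mirror image of $(*)$, and it is blocked by the very same closure-invariance obstacle you yourself identified, so negative minimality alone does not ``provide'' it. The remaining ingredient, passing to the limit over trajectories reaching $w$ in times $T_n$, is also only a hope: nothing bounds the $T_n$, and upper semicontinuity of the Filippov multiflow will not hand you a positive global trajectory from $x$ through $w$ if $T_n\to\infty$. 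So the proposal reduces the theorem to a statement it does not prove.

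What is missing is the paper's key mechanism, and with it the real role of $med(\Lambda)>0$. In your text the measure hypothesis is used only to guarantee $diam(\Lambda)>0$; in the paper (Lemma \ref{lema conexao pontos}) it is fed into the Poincar\'e--Bendixson theorem for PSVFs of \cite{BCE} to produce a point $a\in\Lambda\cap(\Sigma^e\cup\Sigma^s)$, where the dynamics is genuinely set-valued. That point acts as a hub: the union $A^+_a$ of all positive global trajectories through $a$ is $Z$-positively invariant (by exactly your concatenation argument), so positive minimality forces $A^+_a=\Lambda$; negative minimality likewise forces $A^-_a=\Lambda$. Hence every $y$ reaches $a$ in positive time (reversing the negative trajectory from $a$ to $y$) and $a$ reaches every $x$ in positive time, giving exact pairwise connectivity (Lemma \ref{lema conexao pontos}), from which chaos follows as in Theorem \ref{teorema caos} (Lemma \ref{lema conexao caos}). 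Note that minimality is applied only to the two sets $A^{\pm}_a$ attached to the hub, not to $R^{\pm}$ of every point; this is what lets the paper sidestep (admittedly somewhat informally, since compactness of $A^{\pm}_a$ is not verified there either) the closure problem that stops your argument. Moreover, without exploiting a sliding or escaping point there is no source of multiplicity of trajectories at all, so no variant of your scheme could produce the two distinct positive trajectories emanating from the same point $x$ that your sensitivity step requires.
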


Theorem \ref{chaos_minimality} is a very interesting result because presents a connection between two important different objects of the recent theory of PSVF, namely, the chaotic planar systems and the non-trivial $Z$-minimal sets.

\smallskip

In order to prove Theorem \ref{chaos_minimality}, we introduce the next two lemmas. The first one is a generalization of Lemma \ref{lemma caos}.

\begin{lemma}\label{lema conexao pontos}
Under the same hypotheses of Theorem \ref{chaos_minimality}, it holds that for any $x,y\in\Lambda$, there exist a global trajectory $\Gamma(t,y)$ passing through $y$ and $t^{*}>0$ such that $\Gamma^{+}(t^{*},y)=x$.
\end{lemma}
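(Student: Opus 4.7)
The plan is to fix $y\in\Lambda$ and consider the forward-reachable set
\[
A^{+}(y)=\{z\in\Lambda : \exists\ \Gamma^{+}(\cdot,y)\ \mbox{positive global trajectory and}\ t\geq 0\ \mbox{with}\ \Gamma^{+}(t,y)=z\}.
\]
The goal is to prove $A^{+}(y)=\Lambda$ by invoking $Z$-positive minimality of $\Lambda$. Once this is done, each $x\in\Lambda$ is forward-reachable from $y$ and the identity $\Gamma^{+}(t^{*},y)=x$ required by the lemma follows; the strict inequality $t^{*}>0$ in the degenerate case $x=y$ is handled by concatenating a nontrivial cycle in $\Lambda$, which exists because $med(\Lambda)>0$ forces nontrivial recurrent dynamics inside $\Lambda$.

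First I would check that $A^{+}(y)$ is nonempty and $Z$-positively invariant. Nonemptiness is immediate since $y\in A^{+}(y)$ (take $t=0$). For positive invariance, given $z\in A^{+}(y)$, reached from $y$ by some positive global trajectory $\sigma$ at time $s\geq 0$, and any positive global trajectory $\tau$ starting at $z$, the concatenation of $\sigma|_{[0,s]}$ with $\tau$ satisfies Definition \ref{definicao orbita} and yields a positive global trajectory from $y$ passing through $\tau(t,z)$ at time $s+t$ for every $t\geq 0$; thus $\tau(t,z)\in A^{+}(y)$.

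The main obstacle is to show that $A^{+}(y)$ is \emph{closed}, hence compact since $\Lambda$ is. Given $z_{n}\in A^{+}(y)$ with $z_{n}\to z$, there exist positive global trajectories $\Gamma_{n}^{+}$ and times $t_{n}\geq 0$ with $\Gamma_{n}^{+}(t_{n},y)=z_{n}$. If $(t_{n})$ admits a bounded subsequence, the standard upper semicontinuity of Filippov solutions, combined with compactness of $\Lambda$, yields a limit positive global trajectory from $y$ reaching $z$ in finite time. If $t_{n}\to\infty$, then $z$ is only approached asymptotically by orbits emanating from $y$; here I would exploit the hypothesis $med(\Lambda)>0$ together with $Z$-negative minimality to reroute an approximating orbit through a sliding or escaping arc of $\Sigma$ inside $\Lambda$, using the multivalued character of the Filippov flow (Definition \ref{definicao trajetorias}, third bullet) to leave the sliding/escaping region precisely at $z$ in finite time.

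Finally, with $A^{+}(y)$ identified as a nonempty compact $Z$-positively invariant subset of $\Lambda$, condition (iv) of Definition \ref{minimal+-} forces $A^{+}(y)=\Lambda$, completing the proof. The delicate point is exactly the second case of the closedness step: passing from asymptotic approach (which $Z$-positive minimality alone would give) to exact reachability in finite time genuinely requires the combination of $med(\Lambda)>0$, the set-valued character of Filippov trajectories along $\Sigma$, and $Z$-negative minimality.
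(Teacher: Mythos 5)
Your proposal stalls exactly at the step you yourself flag as delicate, and that step is a genuine gap, not a technicality. In the case $t_{n}\to\infty$ you have no argument: the ``rerouting'' idea presupposes that $z$ is \emph{exactly} forward-reachable in finite time from some sliding/escaping arc lying on an orbit through $y$, but exact finite-time reachability of an arbitrary point of $\Lambda$ is precisely what the lemma asserts, so the step is circular. Nothing in your setup guarantees that the approximating trajectories $\Gamma^{+}_{n}$ ever meet $\Sigma^{e}\cup\Sigma^{s}$, nor that the set-valued exit from such an arc can be made to hit the single point $z$ rather than a neighborhood of it. Without closedness, positive minimality gives you at best that $A^{+}(y)$ is dense in $\Lambda$, which is weaker than the statement. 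The same vagueness affects your degenerate case $x=y$: the claim that $med(\Lambda)>0$ ``forces nontrivial recurrent dynamics'' is unsupported, and it is not how the measure hypothesis is actually used.

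The paper's proof is organized so that no such limit/closedness argument is attempted, and it uses the two minimality hypotheses symmetrically rather than using negative minimality as a patch. Since $med(\Lambda)>0$, the Poincar\'e--Bendixson theorem for PSVFs of \cite{BCE} forces $\Lambda$ to contain a point $a\in\Sigma^{e}\cup\Sigma^{s}$ (otherwise $\Sigma\cap\Lambda$ would consist only of crossing and tangency points and that theorem would give $med(\Lambda)=0$). One then takes this single \emph{hub} point $a$ and forms its forward-reachable set $A^{+}_{a}$ and backward-reachable set $A^{-}_{a}$: the first is $Z$-positively invariant, hence equals $\Lambda$ by $Z$-positive minimality, and the second equals $\Lambda$ by $Z$-negative minimality. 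In particular $y\in A^{-}_{a}$, i.e.\ some trajectory runs forward from $y$ to $a$ in finite time $|t_{y}|$, and $x\in A^{+}_{a}$, i.e.\ some trajectory runs forward from $a$ to $x$ in finite time $t_{x}$; concatenating these two pieces at $a$ (orientation is preserved) yields a global trajectory with $\Gamma(t^{*},y)=x$, $t^{*}=t_{x}+|t_{y}|>0$, which also handles $x=y$ with $t^{*}$ strictly positive. The choice of the base point inside the set-valued part of $\Sigma$, and the use of \emph{negative} minimality to pull $y$ forward into $a$, are the ideas missing from your proposal; they are what convert ``asymptotic approach'' into exact finite-time connection. (It is fair to note that the paper applies minimality to $A^{\pm}_{a}$ without verifying their compactness either, so the closedness subtlety you identified is real; but the hub construction is what makes the finite-time argument go through, and your route cannot be completed without it.)
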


\begin{proof}[Proof]
Since $med(\Lambda)>0$, by Poincaré-Bendixson Theorem for PSVF presented in \cite{BCE}, there exist at least a set $A\subset\Sigma\cap(\Sigma^{e}\cup\Sigma^{s})$. Othewise, we have $\Sigma\cap\Lambda=\Sigma^{c}\cup\Sigma^{t}$ and then by the referred theorem we get $med(\Lambda)=0$, where $\Sigma^{t}$ is the set of tangencies points of $Z$. For each $a\in A$, denote by $\Pi^{+}_{a}$ the set of all positive global trajectories passing through $a$ and by $\Pi^{-}_{a}$ its negative analogous. Now consider the sets
$$
A^{\pm}_{a}=\displaystyle\bigcup_{\Gamma_{a}\in\Pi^{\pm}_{a}}\Gamma_{a}(t,a)\subset\Lambda.
$$
Actually we have $A^{\pm}_{a}=\Lambda$, since $A^{\pm}_{a}$ is $Z$-positively (respectively negatively) invariant restrained in the $Z$-positively (respectively negatively) minimal set $\Lambda$. In order to see that $A^{+}_{a}$ is $Z$-positively invariant, let $p$ be a point in $A^{+}_{a}$ and $\Gamma_{p}(t,p)$ a positive global trajectory passing through $p$. Since $p\in A^{+}_{a}$, then there exists a positive global trajectory $\tilde{\Gamma}_{a}(t,a)$ passing through $a$ and $t_{0}>0$ such that $\tilde{\Gamma}_{a}(t_{0},a)=p$. Consequently $\Gamma_{p}(t,p)$ belongs to $A^{+}_{a}$ once it is restrained to the positive global trajetory $\hat{\Gamma}_{a}(t,a)=\tilde{\Gamma}_{a}(t,a)\cup\Gamma_{p}(t,p)\subset A^{+}_{a}$. Analogously we can prove that $A^{-}_{a}$ is $Z$-negatively invariant.

\smallskip

Now consider $x,y\in\Lambda$ arbitrary points. Since $A^{-}_{a}=\Lambda=A^{+}_{a}$, there exists $\Gamma_{a}^{+}(t,a)\in A^{+}_{a}$ a positive global trajectory, $\Gamma_{a}^{-}(t,a)\in A^{-}_{a}$ a negative global trajectory and values $t_{x}>0$, $t_{y}<0$ such that $\Gamma_{a}^{+}(t_{x},a)=x$ and $\Gamma_{a}^{+}(t_{y},a)=y$. Consequently there exists a global trajectory $\Gamma(t,y)$ passing through $y$ and $t^{*}=t_{x}+|t_{y}|>0$ such that $\Gamma(t^{*},y)=x$.
\end{proof}

\begin{lemma}\label{lema conexao caos}
Under the same hypotheses of Theorem \ref{chaos_minimality}, if any two points of $\Lambda$ can be connected by a global trajectory of $Z$, then $Z$ is chaotic on $\Lambda$.
\end{lemma}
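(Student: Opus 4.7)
The plan is to mimic the proof of Theorem \ref{teorema caos}, only replacing the role played there by Lemma \ref{lemma caos} with the connectivity hypothesis of the present lemma. So I would verify the two defining conditions of Definition \ref{definicao caotico} separately, namely topological transitivity on $\Lambda$ and sensitive dependence on initial conditions.

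First, I would establish topological transitivity. Let $U,V\subset\Lambda$ be any two nonempty open sets and pick $\lambda_1\in U$, $\lambda_2\in V$. Invoking the hypothesis in the orientation-sensitive form used throughout this section (that is, as in Lemma \ref{lema conexao pontos}: any two points of $\Lambda$ are joined by a global trajectory traversed forward in time), I obtain a positive global trajectory $\Gamma_Z^{+}(t,\lambda_1)$ and a value $t_0>0$ with $\Gamma_Z^{+}(t_0,\lambda_1)=\lambda_2\in V$, which is exactly Definition \ref{definicao transitividade}.

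Second, I would verify sensitive dependence. The key observation is that $med(\Lambda)>0$ forces $diam(\Lambda)>0$, since a set of positive Lebesgue measure in $\R^2$ cannot be a single point. Set $r=diam(\Lambda)/2>0$; by definition of diameter, there exist $a,b\in\Lambda$ with $d(a,b)>r$. Now fix any $x\in\Lambda$ and $\varepsilon>0$, and take $y\in B_\varepsilon(x)\cap\Lambda$ (which is nonempty, since $x$ itself lies there, exactly as in the proof of Theorem \ref{teorema caos}). Applying the connection hypothesis twice I obtain a positive global trajectory $\Gamma_x^{+}$ through $x$ and a time $t_1>0$ with $\Gamma_x^{+}(t_1,x)=a$, and likewise a positive global trajectory $\Gamma_y^{+}$ through $y$ with $\Gamma_y^{+}(t_2,y)=b$ for some $t_2>0$. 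Since both $a\in\Gamma_x^{+}$ and $b\in\Gamma_y^{+}$, the Hausdorff-type supremum in Definition \ref{definicao sensibilidade} satisfies
\[
d_H(\Gamma_x^{+},\Gamma_y^{+})\ \ge\ d(a,b)\ >\ r,
\]
giving the required sensitive dependence, and hence chaos on $\Lambda$.

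The only real subtlety, and therefore the main thing to be careful about, is the orientation of the connecting trajectory: the lemma as stated mentions only a global trajectory, but to conclude transitivity and sensitive dependence one needs a positive global trajectory from a prescribed initial point. This is not a genuine obstacle, because the hypothesis is being used in the same sense as Lemma \ref{lema conexao pontos}, whose conclusion provides precisely a positive-time connection $\Gamma^{+}(t^{*},y)=x$ with $t^{*}>0$; once this orientation point is settled, the remainder is a direct transcription of the argument used for Theorem \ref{teorema caos}.
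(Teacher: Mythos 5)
Your proposal is correct and matches the paper's argument: the paper's own proof is precisely the one-line instruction to repeat the proof of Theorem \ref{teorema caos} with Lemma \ref{lema conexao pontos} supplying the point-to-point connections in place of Lemma \ref{lemma caos}, which is exactly what you carry out (and your explicit handling of the orientation issue and of $diam(\Lambda)>0$ via $med(\Lambda)>0$ only makes the sketch more complete).
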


\begin{proof}[Proof]
The proof of Lemma \ref{lema conexao caos} is similar to the proof of Theorem \ref{teorema caos} by using Lemma \ref{lema conexao pontos} instead of Lemma \ref{lemma caos}.
\end{proof}

\begin{proof}[Proof of Theorem \ref{chaos_minimality}]
The proof is straightforward from Lemmas \ref{lema conexao pontos} and \ref{lema conexao caos}.
\end{proof}

\begin{remark}
One should note that we can not change the hypotheses of Theorem \ref{chaos_minimality} by considering $Z$-minimal sets instead of $Z$-positively minimal sets and $Z$-negatively minimal sets simultaneously. Indeed, consider the PSVF $Z_{1}$ and the set $\Lambda_{1}$ as presented in Proposition \ref{proposicao exemplo 2}. It holds that $\Lambda_{1}$ is $Z_{1}$-minimal. Nevetheless, $Z_{1}$ is not chaotic on $\Lambda_{1}$, since it is not topologically transitive on $\Lambda_{1}$. In order to see that, consider a nonempty open set $U$ located in $\Sigma^{+}$ just above the sliding segment $S$ between $q$ and $p_{+}$ in such way that all points of $U$ reach $S$ from $\Sigma^{+}$ to $\Sigma$ and do not enter in the region $\Sigma^{-}\setminus\Sigma$. Consider also a nonempty open set $V$ under the same conditions of $U$, however located under $S$ on $\Sigma^{-}$. Thus it is clear that all points of $U$ and $V$ reach $S$ and slides to $\partial \Lambda_{1}$ through the point $q$. However, since $\partial \Lambda_{1}$ is $Z_{1}$-positively minimal, it follows that the trajectories of $U$ and $V$ do not escape from $\partial \Lambda_{1}$ for positive values of time. Consequently we can not connect points of $U$ and $V$ through a positive global trajectory and therefore $Z_{1}$ is not topologically transitive on $\Lambda_{1}$.

\end{remark}

\subsection{Conclusion}\label{conclusao}

In this paper we have verified the existence of non deterministic chaos in planar PSVFs whithout symmetry or presenting a canard structure. As far as the authors know, this is the first time that non-smooth systems with such characteristic are observed in the planar case. Moreover, we introduce definitions of $Z$-minimal sets of PSVFs taking into account the fact that PSVFs have a strong dependence of the orientation of the trajectories, as we can see in Definition \ref{definicao trajetorias}. Finally, we verify the presence of chaotic behavior in planar PSVFs and present a result re\-la\-ting chaotic behaviour with orientable minimality, which emphasizes the importance of providing the definition of orientable minimality.\\

\noindent {\textbf{Acknowledgments.} The first and second authors are partially supported by the CNPq-BRAZIL grant 478230/2013-3. The second author is partially supported by the FAPESP-BRAZIL grant 2012/00481-6. The third author is supported by the FAPESP-BRAZIL grant 2010/18015-6}.

\end{document}